\documentclass[11pt]{article}

\usepackage[T1]{fontenc}
\usepackage[utf8]{inputenc}
\usepackage{lmodern}
\usepackage[english]{babel}
\usepackage{amsmath,amssymb,amsthm}
\usepackage{microtype}
\usepackage{booktabs}
\usepackage{array}
\usepackage{geometry}
\usepackage{xcolor}
\usepackage{hyperref}

\definecolor{LINK}{rgb}{0.30,0.12,0.08}
\definecolor{URL}{rgb}{0.45,0.12,0.48}
\definecolor{CITE}{rgb}{0.03,0.28,0.08}
\hypersetup{
  colorlinks=true,
  linkcolor=LINK,
  urlcolor=URL,
  citecolor=CITE,
  pdfborder={0 0 0},
  linktocpage=true,
  pdftitle={Admissible qx+1 Sequences, Semiconvergents, and Rational Catalan Numbers},
  pdfauthor={Mike Winkler},
  pdfsubject={Admissible qx+1 sequences and extremal Beatty passage counts},
  pdfkeywords={qx+1 problem, Beatty sequence, semiconvergent, rational Catalan number, Dyck path, OEIS}
}

\newtheorem{theorem}{Theorem}
\newtheorem{proposition}[theorem]{Proposition}
\newtheorem{corollary}[theorem]{Corollary}
\newtheorem{lemma}[theorem]{Lemma}
\newtheorem{remark}[theorem]{Remark}
\newtheorem{definition}[theorem]{Definition}

\newcommand{\OEIS}[1]{\href{https://oeis.org/#1}{#1}}
\newcommand{\Aq}{\mathcal A}
\newcommand{\Lq}{\mathcal L}
\newcommand{\Uq}{\mathcal U}
\newcommand{\Cat}{\operatorname{Cat}}

\title{Admissible \(qx+1\) Sequences, Semiconvergents,\\
and Rational Catalan Numbers\\[5mm]}
\author{Mike Winkler\\[5mm]
Faculty of Mathematics, Ruhr University Bochum, Germany
\\mike.winkler@ruhr-uni-bochum.de}
\date{September 14, 2026\\[5mm]}

\begin{document}

\maketitle

\begin{abstract}
For an odd integer \(q\geq3\), let \(a_q(r)\) count finite words in
\(\{q/2,1/2\}\) containing exactly \(r\) factors \(q/2\), with every
proper prefix product greater than \(1\) and total product less than \(1\).
For \(q=3,5,7\), these are OEIS \OEIS{A100982}, \OEIS{A174795}, and
\OEIS{A174796}. Put
\[
 \alpha_q=\log_2q,\qquad m_r(q)=\lfloor r\alpha_q\rfloor.
\]
Using the classical cycle lemma in this \(q\)-specific setting, we recover
the two natural Beatty passage bounds
\[
 \frac1r\binom{m_r(q)-1}{r-1}
 \leq a_q(r)\leq
 \frac1r\binom{m_r(q)}{r-1},
\]
and prove their equality criteria directly. The lower and upper equality
orders are the denominators of the strict lower and upper one-sided best
approximations to \(\log_2q\); together they are the denominators of all
convergents and semiconvergents. Equivalently, they are the strict record
minima and maxima of the binary mantissas \(q^r/2^{m_r(q)}\). At every
nontrivial equality order, the admissible words are in explicit bijection
with rational Dyck paths, so that \(a_q(r)\) is a rational Catalan number.
We also prove monotonicity in \(q\), compare the three OEIS sequences, and
derive their growth constants and exact normalized oscillations.
\end{abstract}

\medskip
\noindent\textit{Keywords.}
\(qx+1\) problem, admissible sequence, Beatty sequence, continued fraction,
semiconvergent, rational Catalan number, Dyck path, OEIS.

\smallskip
\noindent\textit{2020 Mathematics Subject Classification.}
11B83, 11A55, 05A15, 11B37.

\section{Introduction}
\label{sec:introduction}

The integer sequence \OEIS{A100982} counts admissible multiplicative words
built from the two factors \(3/2\) and \(1/2\). It occurs naturally in the
coefficient dynamics of the \(3x+1\) map and was recorded in connection
with Wagon's treatment of the problem \cite{Wagon85,OEIS100982}. The
analogous \(5x+1\) and \(7x+1\) counts are \OEIS{A174795} and
\OEIS{A174796} \cite{OEIS174795,OEIS174796}. The OEIS entries record several recurrences, including formulas of
Zarubin; \OEIS{A100982} also contains formulas explicitly labelled
conjectural. Our purpose is not to replace those recurrences but to identify
the arithmetic structure of the orders at which the counts attain their
natural cycle bounds.

For odd \(q\geq3\), let \(a_q(r)\) denote the number of words containing
exactly \(r\) factors \(q/2\), such that the running product stays above
\(1\) until the final step and then crosses below \(1\). These words are
lower first-passage compositions relative to the Beatty boundary
\[
 m_i(q)=\lfloor i\log_2q\rfloor,
\]
which places \OEIS{A100982}, \OEIS{A174795}, and \OEIS{A174796} in a
single family.

General Beatty passage counts satisfy cycle bounds whose equality cases are
governed by one-sided Diophantine approximation; a broader marked-rotation
and lattice-factorization treatment is given in \cite{WinBeatty}. For the present paper, however, the required bounds and equality criteria
are reproved directly from the classical cycle lemma of Dvoretzky and
Motzkin \cite{DvoretzkyMotzkin47}; see also Dershowitz and Zaks
\cite{DershowitzZaks90}. This removes any logical dependence on the general
theory. When \(\alpha_q=\log_2q\), the
result has a particularly concrete form. If
\[
 M_q(r)=\frac{q^r}{2^{\lfloor r\log_2q\rfloor}},
 \qquad 1<M_q(r)<2,
\]
then the lower equality orders are exactly the strict record minima of
\(M_q(r)\), whereas the upper equality orders are exactly the strict record
maxima. Thus extremal values occur precisely when a power \(q^r\) gives a
new best multiplicative approach to a power of two from one of the two
sides.

Han\v{c}l and Turek proved that best one-sided approximations of the first
and second kinds coincide and described them by convergents and
semiconvergents \cite[Theorem~4.5]{HanclTurek}. Consequently the union of
the two equality sets is exactly the set of denominators of the convergents
and semiconvergents of \(\log_2q\). For \(q=3\), this is the set of
terms of OEIS \OEIS{A206788}; that entry lists the denominator \(1\)
twice, once for each side, while as an equality-order set it occurs only
once.

At equality orders there is also a direct enumerative interpretation. The
admissible words become rational Dyck paths in a coprime rectangle, and the
extremal values are rational Catalan numbers. For example,
\[
 A100982(7)=\Cat(7,4)=30,\qquad
 A174795(4)=\Cat(4,5)=14.
\]
For \OEIS{A174796}, the values \(1,2,7,30,143,728\) agree initially with
\OEIS{A006013}; Section~\ref{sec:catalan} explains exactly why this
coincidence persists through order \(6\) and then breaks.

For \(q=3\), a Beatty--Ferrers model with cut refinements, matroid structure,
and a power-of-two law was developed in \cite{WinFerrers}. The general
marked-rotation theory is the subject of \cite{WinBeatty}. The present paper
is intentionally narrower: its contribution is the \(qx+1\) sequence
synthesis, the self-contained extremal-order theorem, its semiconvergent and
power-of-two interpretations, the rational Dyck-path specialization at those
extremal orders, and the comparison across \(q\).

\section{Admissible words and Beatty passage counts}
\label{sec:words}

Fix an odd integer \(q\geq3\), and put
\[
 \alpha_q=\log_2q,\qquad
 m_r=m_r(q)=\lfloor r\alpha_q\rfloor.
\]
Since \(q\) is not a power of \(2\), \(\alpha_q\) is irrational.

\begin{definition}
A \emph{\(q\)-admissible word of order \(r\)} is a finite word
\[
 w=(x_1,\ldots,x_L),
 \qquad
 x_j\in\left\{\frac q2,\frac12\right\},
\]
containing exactly \(r\) occurrences of \(q/2\), such that
\[
 \prod_{j=1}^{\ell}x_j>1
 \quad(1\leq\ell<L),
 \qquad
 \prod_{j=1}^{L}x_j<1.
\]
Let \(\Aq_q(r)\) be the set of these words and set
\[
 a_q(r)=|\Aq_q(r)|.
\]
\end{definition}

All three OEIS sequences used here have offset \(1\), and our order \(r\)
uses that same index:
\[
 a_3(r)=A100982(r),\qquad
 a_5(r)=A174795(r),\qquad
 a_7(r)=A174796(r).
\]
The formula fields of the three OEIS entries write the prefix condition for
\(1<m<L\), omitting \(m=1\). Their example fields settle the intended
convention: in all three entries the unique word of order \(1\) begins
with \(q/2\), whereas the literal formula would also admit a word beginning
with \(1/2\). Thus Definition~1 agrees with the OEIS data and examples
\cite{OEIS100982,OEIS174795,OEIS174796}.

For later comparison across slopes, define for every real \(\alpha>1\)
\[
 c_1(\alpha)=1,
\]
and, for \(r\geq2\),
\begin{equation}
 c_r(\alpha)=
 \#\left\{
 1\leq s_1<\cdots<s_{r-1}:
 s_i\leq\lfloor i\alpha\rfloor\ (1\leq i<r)
 \right\}.
 \tag{2.1}
\end{equation}
This is the lower Beatty passage count in increasing-sequence form.

\begin{lemma}[Beatty passage representation]
\label{lem:beatty-passage}
Every word in \(\Aq_q(r)\) has length \(m_r+1\) and has a unique
representation
\[
 \frac q2\left(\frac12\right)^{b_1-1}
 \frac q2\left(\frac12\right)^{b_2-1}
 \cdots
 \frac q2\left(\frac12\right)^{b_r-1},
\]
where \(b_i\geq1\). If \(B_i=b_1+\cdots+b_i\), then the word is
admissible if and only if
\[
 B_r=m_r+1,
 \qquad
 B_i\leq m_i
 \quad(1\leq i<r).
 \tag{2.2}
\]
Consequently
\[
 a_q(r)=c_r(\alpha_q).
 \tag{2.3}
\]
\end{lemma}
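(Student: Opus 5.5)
Any admissible word must begin with \(q/2\), since a first letter \(1/2\) would give the prefix product \(1/2<1\); if \(L=1\) the whole product \(1/2\) would also fail to be \(<1\)-after-\(>1\) in the required way, and in any case the \(r\geq1\) occurrences of \(q/2\) exclude \(L=1\) with \(x_1=1/2\). Hence the word splits uniquely into \(r\) blocks, each a single \(q/2\) followed by a (possibly empty) run of \(1/2\)'s; writing the \(i\)-th block as \(\frac q2(\frac12)^{b_i-1}\) with \(b_i\geq1\) gives the displayed representation, and its uniqueness is just the uniqueness of the positions of the letters \(q/2\). After the \(i\)-th block the running product is \(q^i/2^{B_i}\), and a prefix ending \(j\) letters into the \(1/2\)-run of block \(i\) has product \(q^i/2^{B_{i-1}+1+j}\).

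Since the prefix products decrease along a run of \(1/2\)'s and only increase at a \(q/2\), the prefix condition ``\(>1\)'' need only be checked at the last letter of each block except the final one, plus the second-to-last letter of the word. Thus admissibility is equivalent to \(q^i/2^{B_i}>1\) for \(1\leq i<r\), together with \(q^r/2^{B_r-1}>1>q^r/2^{B_r}\) for the final block (the case \(b_r=1\), where the second-to-last letter ends block \(r-1\), is covered by the same inequalities since \(q^r/2^{B_r-1}=q\cdot q^{r-1}/2^{B_{r-1}}\) and also needs the \(q/2\)-step not to matter; I will check that this case is consistent). Taking \(\log_2\) and using irrationality of \(\alpha_q\), so that \(i\alpha_q\) is never an integer, \(q^i>2^{B_i}\) becomes \(B_i<i\alpha_q\), i.e.\ \(B_i\leq\lfloor i\alpha_q\rfloor=m_i\). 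The final pair becomes \(B_r-1<r\alpha_q<B_r\), i.e.\ \(B_r=m_r+1\). This proves (2.2), and the length is \(B_r=m_r+1\).

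For (2.3), map the word to the increasing sequence \(s_i=B_i\) for \(1\leq i<r\). The constraints \(b_i\geq1\) for \(i<r\) are exactly \(1\leq s_1<\cdots<s_{r-1}\), and (2.2) gives \(s_i\leq m_i=\lfloor i\alpha_q\rfloor\). The last part \(b_r=m_r+1-B_{r-1}\) is then forced, and it is \(\geq1\) automatically because \(B_{r-1}\leq m_{r-1}\leq m_r\). Conversely every such sequence determines a valid word, so the map is a bijection onto the set in (2.1), with the case \(r=1\) giving the single word \(\frac q2(\frac12)^{m_1}\), matching \(c_1=1\). The only delicate step is the careful bookkeeping of which prefixes must be checked, especially the boundary prefix \(\ell=L-1\) and the case \(b_r=1\); everything else is a direct translation via \(\log_2\) and strictness from irrationality.
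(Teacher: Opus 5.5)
Your proof follows the same route as the paper: force the first letter to be \(q/2\), split into blocks, note that prefix products are minimal at block endpoints, and translate to \(B_i\leq m_i\) and \(B_r=m_r+1\) via \(\log_2\) and the irrationality of \(\alpha_q\).

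The one step you deferred, \(b_r=1\), needs to be closed rather than postponed. Your formula \(q^r/2^{B_r-1}\) for the \((L-1)\)-prefix is valid only when \(b_r\geq2\); when \(b_r=1\) that prefix has product \(q^{r-1}/2^{B_{r-1}}\).

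The paper avoids this by first proving that the last letter is \(1/2\). If the last letter were \(q/2\), the total product would be \(q/2\) times the \((L-1)\)-prefix product. That prefix product is \(>1\) as a proper prefix, or equals the empty product \(1\) when \(L=1\). So the total would exceed \(1\), which is a contradiction.

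Hence every admissible word has \(b_r\geq2\). In the converse direction, (2.2) itself forces \(b_r\geq m_r+1-m_{r-1}\geq2\). So your equivalence only ever uses the case where your prefix formula is correct. The same observation also gives the length \(L=m_r+1\) directly from \(2^{L-1}<q^r<2^L\), as in the paper.
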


\begin{proof}
The last letter must be \(1/2\), since multiplication by \(q/2>1\) cannot
produce the first passage below \(1\). Hence
\[
 \frac{q^r}{2^{L-1}}>1,
 \qquad
 \frac{q^r}{2^L}<1,
\]
so \(2^{L-1}<q^r<2^L\). Because \(r\alpha_q\notin\mathbb Z\), this gives
\(L=m_r+1\).

The first letter must be \(q/2\). Grouping each occurrence of \(q/2\)
with the following string of factors \(1/2\) gives the stated block
decomposition and \(B_r=L\). At the end of block \(i\) the running product
is \(q^i/2^{B_i}\). After the initial factor of a block the product decreases,
so the block endpoint is its smallest value. In the last block every proper
prefix is still above \(1\), because the product immediately before the
last letter is above \(1\). Hence only the first \(r-1\) block endpoints
need be tested. Since \(i\alpha_q\) is nonintegral,
\[
 \frac{q^i}{2^{B_i}}>1
 \iff B_i<i\alpha_q
 \iff B_i\leq m_i.
\]
Thus the admissible words are in bijection with the increasing partial sums
counted in (2.1). The final part is automatically positive because
\(B_{r-1}\leq m_{r-1}<m_r+1\).
\end{proof}

\section{Cycle bounds and extremal orders}
\label{sec:extremal}

We first record the cyclic-rotation fact used in both bounds.  It is the
standard partial-sum form of the classical cycle lemma of Dvoretzky and
Motzkin \cite{DvoretzkyMotzkin47}; a modern account and applications are
given by Dershowitz and Zaks \cite{DershowitzZaks90}.

\begin{lemma}[Cyclic rotation]
\label{lem:rotation}
Let \(x=(x_1,\ldots,x_r)\) be a positive composition of an integer \(T\),
and let \(X_i=x_1+\cdots+x_i\). Every cyclic orbit contains a rotation
satisfying
\[
 X_i\leq\frac{iT}{r}
 \qquad(1\leq i<r).
 \tag{3.1}
\]
If \(\gcd(r,T)=1\), every orbit has length \(r\) and contains exactly one
rotation satisfying the strict inequalities \(X_i<iT/r\) for all
\(i<r\). For arbitrary \(r,T\), an orbit containing such a strict rotation
has full length and contains at most one of them.
\end{lemma}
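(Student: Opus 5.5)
We prove the lemma by the standard minimum-deviation argument for the cycle lemma. Put
\[
 D_i=X_i-\frac{iT}{r},\qquad 0\leq i\leq r,
\]
so that \(D_0=D_r=0\). Extend \(x\) periodically and define \(D_i\) for all integers \(i\) by the same formula. Since \(X_{i+r}=X_i+T\), we get \(D_{i+r}=D_i\), so \(D\) is \(r\)-periodic.

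Rotating \(x\) to start after position \(k\) gives partial sums \(X_{k+i}-X_k\). The deviation of the rotated word at step \(i\) is therefore \(D_{k+i}-D_k\). Condition (3.1) for this rotation says exactly \(D_{k+i}\leq D_k\) for \(1\leq i<r\). The strict version says \(D_{k+i}<D_k\) for those \(i\).

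\textbf{Existence.} Choose \(k\in\{0,\ldots,r-1\}\) at which \(D_k\) is maximal over a full period. By periodicity, \(D_{k+i}\leq D_k\) for every \(i\). Hence rotation \(k\) satisfies (3.1), and every cyclic orbit contains such a rotation.

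\textbf{Uniqueness of a strict rotation.} Suppose rotation \(k\) satisfies the strict inequalities. Then \(D_k\) is the strict maximum of \(D\) on the residues modulo \(r\): every other residue \(k+i\), \(1\leq i<r\), has \(D_{k+i}<D_k\). Conversely, any strict rotation must start at a residue where \(D\) attains its maximum uniquely. A strict unique maximum is attained at only one residue. So at most one starting index \(k\bmod r\) gives a strict rotation.

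\textbf{Full orbit length.} This is the step needing care. If the orbit had length \(d<r\), then \(x\) would have period \(d\). Then \(D_{k+d}=D_k\), because the increment \(X_{k+d}-X_k\) equals \(dT/r\) when \(x\) has period \(d\) dividing \(r\). This contradicts \(D_{k+d}<D_k\). Hence an orbit containing a strict rotation has full length \(r\).

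Moreover, within a full-length orbit, distinct starting indices give distinct words. So "at most one index" translates into "at most one word of the orbit."

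\textbf{The coprime case.} When \(\gcd(r,T)=1\), we have \(D_i\in\frac1r\mathbb Z\) with \(rD_i\equiv rX_i-iT\equiv -iT\pmod r\). These residues are distinct for \(i=0,\ldots,r-1\), so the values \(D_0,\ldots,D_{r-1}\) are pairwise distinct.

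It follows that the maximum over a period is unique, and the rotation at that index is strict. By the previous paragraph, such an orbit has full length \(r\) and contains exactly one strict rotation. Finally, every orbit has length \(r\) in the coprime case, since a period \(d<r\) would force \(r/d\) to divide \(T\), which is impossible.
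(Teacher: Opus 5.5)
Your proof is correct and essentially the same as the paper's: your deviations $D_i=X_i-iT/r$ are the paper's $S_i=rX_i-iT$ divided by $r$. You use the same three ingredients: start the rotation after a maximum over one period, use the distinct residues $-iT \bmod r$ in the coprime case, and note that a short orbit returns to the rational line after one period. Your added remarks (that $r/d\mid T$ rules out short orbits when $\gcd(r,T)=1$, and that a unique starting index gives a unique word) only make explicit what the paper leaves implicit.
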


\begin{proof}
Put \(S_0=0\) and
\[
 S_i=rX_i-iT\qquad(1\leq i\leq r).
\]
Starting immediately after a maximum of
\(S_0,\ldots,S_{r-1}\) gives a rotation for which all proper transformed
partial sums are nonpositive, proving (3.1). If \(\gcd(r,T)=1\), the values
\(S_0,\ldots,S_{r-1}\) are pairwise distinct modulo \(r\), hence the
maximum is unique and no proper transformed sum vanishes. This gives the
unique strict rotation. Conversely a shorter period forces equality with
the rational line after one period, so a strict rotation can occur only in
a full orbit. Extending \(S_i\) periodically by \(S_{i+r}=S_i\), a strict
rotation beginning after index \(k\) means
\[
 S_i<S_k\qquad(k<i<k+r).
\]
Thus \(S_k\) is the strict maximum on one complete residue system modulo
\(r\), and such a starting point is unique.
\end{proof}

For an irrational \(\alpha>1\), write
\[
 m_i=\lfloor i\alpha\rfloor,
 \qquad
 \delta_i=\{i\alpha\}.
\]
The next elementary lemma converts the record conditions into rational
floor conditions.

\begin{lemma}[Record--floor equivalence]
\label{lem:record-floor}
Let \(r\geq2\), \(m=m_r\), and \(t=m+1\). Then
\[
 \delta_r<\delta_j\quad(1\leq j<r)
 \tag{3.2}
\]
if and only if
\[
 \gcd(r,m)=1,
 \qquad
 m_j=\left\lfloor\frac{mj}{r}\right\rfloor
 \quad(1\leq j<r).
 \tag{3.3}
\]
Similarly,
\[
 \delta_r>\delta_j\quad(1\leq j<r)
 \tag{3.4}
\]
if and only if
\[
 \gcd(r,t)=1,
 \qquad
 m_j=\left\lfloor\frac{tj}{r}\right\rfloor
 \quad(1\leq j<r).
 \tag{3.5}
\]
\end{lemma}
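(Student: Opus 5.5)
The plan is to compare \(j\alpha\) with the rational line \(jm/r\) (respectively \(jt/r\)) through the identity
\[
 j\alpha-\frac{jm}{r}=\frac{j}{r}(r\alpha-m)=\frac{j\delta_r}{r},
\]
which gives \(j\alpha=jm/r+j\delta_r/r\) with \(0<j\delta_r/r<\delta_r\) for \(1\le j<r\). For the lower statement, write \(jm=r\lfloor jm/r\rfloor+\rho_j\) with \(0\le\rho_j<r\). Then
\[
 j\alpha=\Big\lfloor\frac{jm}{r}\Big\rfloor+\frac{\rho_j+j\delta_r}{r}.
\]
Since \(\rho_j\le r-1\) and \(j\delta_r<r\), the fractional part \((\rho_j+j\delta_r)/r\) lies in \([0,2)\). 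Hence \(m_j=\lfloor jm/r\rfloor\) exactly when \(\rho_j+j\delta_r<r\), and in that case \(\delta_j=(\rho_j+j\delta_r)/r\).

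For the direction (3.3)\(\Rightarrow\)(3.2), coprimality gives \(\rho_j\ge1\) for \(1\le j<r\), since \(r\nmid jm\). The floor condition then gives \(\delta_j=(\rho_j+j\delta_r)/r\). We need this to exceed \(\delta_r\), i.e.\ \(\rho_j>(r-j)\delta_r\). This is the one slightly delicate point, and I would handle it with a symmetry trick. Apply the floor condition also to \(r-j\): since \(m_{r-j}=\lfloor (r-j)m/r\rfloor\), and \((r-j)m\equiv -\rho_j\pmod r\) has residue \(r-\rho_j\), the criterion above yields \(r-\rho_j+(r-j)\delta_r<r\). That is, \(\rho_j>(r-j)\delta_r\), which is exactly what is needed.

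For the converse (3.2)\(\Rightarrow\)(3.3), I would use additivity of floors. For \(1\le j<r\) we have \(m_j+m_{r-j}=m_r-1+[\delta_j+\delta_{r-j}\ge1]\), and \(\delta_j+\delta_{r-j}\equiv\delta_r\pmod 1\). Since \(\delta_j>\delta_r\) and \(\delta_{r-j}>\delta_r\), the sum \(\delta_j+\delta_{r-j}\) cannot equal \(\delta_r\), so it equals \(1+\delta_r\). This gives the lower bound \(\delta_j\ge\delta_r\) with the explicit relation \(\delta_j=1+\delta_r-\delta_{r-j}\). A more direct route is the following. From \(\delta_j>\delta_r\), the quantity \(rj\alpha-jm=j\delta_r\) satisfies
\[
 r m_j=rj\alpha-r\delta_j<jm+j\delta_r-r\delta_r\le jm,
\]
so \(m_j<jm/r\), and in particular \(r\nmid jm\) cannot fail with equality. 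For the matching lower bound, \(r m_j>jm+j\delta_r-r\ge jm-r\), using \(\delta_j<1\). Therefore \(m_j=\lceil jm/r\rceil-1\). Because \(rm_j<jm\) is strict for all \(1\le j<r\), the rational \(jm/r\) is never an integer for \(j<r\) unless \(rm_j=jm\), which is excluded. So \(r\nmid jm\) for \(1\le j<r\), which gives \(\gcd(r,m)=1\), and \(m_j=\lfloor jm/r\rfloor\). I would actually use this cleaner inequality argument for both directions, reducing the first direction to the symmetry trick only where strictness is needed.

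The upper case (3.4)\(\Leftrightarrow\)(3.5) will be done identically with \(t=m+1\), using \(j\alpha=jt/r-j(1-\delta_r)/r\). Replacing \(\delta\) by \(1-\delta\) and floors by ceilings interchanges the roles. Concretely, \(\delta_j<\delta_r\) gives \(jt/r-1<m_j<jt/r\) strictly, which yields coprimality and \(m_j=\lfloor jt/r\rfloor\). The reverse direction again uses the residue of \(jt\) modulo \(r\) together with the symmetry \(j\leftrightarrow r-j\). I expect the only real obstacle to be bookkeeping of strict versus non-strict inequalities in the converse directions, which the \(j\leftrightarrow r-j\) pairing resolves.
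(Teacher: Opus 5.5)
Your proof is correct and follows essentially the paper's route. The converse directions use the same pairing of \(j\) with \(r-j\): you encode it through residues, \(\rho_{r-j}=r-\rho_j\), while the paper uses \(\delta_j+\delta_{r-j}\in\{\delta_r,\delta_r+1\}\) and argues by contradiction. Your sandwich \(jm/r-1<m_j<jm/r\) simply merges the paper's separate derivations of coprimality and of the floor identity into one step. Delete the exploratory additivity paragraph, though: its identity should read \(m_j+m_{r-j}=m_r-[\delta_j+\delta_{r-j}\ge1]\), and its conclusion \(\delta_j\ge\delta_r\) only restates the hypothesis.
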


\begin{proof}
Assume first (3.2). If \(d>1\) divides both \(r\) and \(m\), then with
\(j=r/d\) one has \(\delta_j=\delta_r/d<\delta_r\), a contradiction.
Thus \(\gcd(r,m)=1\). Since \(m/r<\alpha\),
\(\lfloor mj/r\rfloor\leq m_j\). If the inequality were strict for some
\(j<r\), then
\[
 \delta_j=\alpha j-m_j
 <\alpha j-\frac{mj}{r}
 =\frac{j}{r}\delta_r<\delta_r,
\]
again a contradiction. Hence (3.3) holds.

Conversely assume (3.3), and suppose \(\delta_j<\delta_r\) for some
\(j<r\). Since
\(\delta_j+\delta_{r-j}\in\{\delta_r,\delta_r+1\}\), the second
alternative is impossible, so
\(\delta_{r-j}=\delta_r-\delta_j\). The two floor equalities in (3.3)
imply
\[
 \delta_j\geq\frac{j}{r}\delta_r,
 \qquad
 \delta_{r-j}\geq\frac{r-j}{r}\delta_r.
\]
Substitution gives the reverse inequality for \(\delta_j\), hence equality.
Then \(m_j=mj/r\), so \(r\mid mj\). Since \(\gcd(r,m)=1\), this forces
\(r\mid j\), impossible for \(1\leq j<r\). Thus (3.2) holds.

For the upper statement, assume (3.4). If \(d>1\) divides \(r\) and
\(t\), then with \(j=r/d\), the integer \(t/d\) is the least integer
above \(\alpha j\), and
\[
 1-\delta_j=\frac{1-\delta_r}{d}<1-\delta_r,
\]
contradicting (3.4). Hence \(\gcd(r,t)=1\). Since \(t/r>\alpha\), if
\(\lfloor tj/r\rfloor>m_j\), then
\[
 1-\delta_j
 \leq\frac{tj}{r}-\alpha j
 =\frac{j}{r}(1-\delta_r)<1-\delta_r,
\]
again a contradiction. This proves (3.5).

Conversely assume (3.5) and suppose \(\delta_j>\delta_r\). Then
\(\delta_j+\delta_{r-j}=\delta_r+1\), so
\[
 (1-\delta_j)+(1-\delta_{r-j})=1-\delta_r.
\]
The two floor equalities in (3.5) imply
\[
 1-\delta_j>\frac{j}{r}(1-\delta_r),
 \qquad
 1-\delta_{r-j}>\frac{r-j}{r}(1-\delta_r),
\]
whose sum is impossible. Thus (3.4) follows.
\end{proof}

\begin{remark}[Mechanical-word interpretation]
The floor conditions in Lemma~\ref{lem:record-floor} are classical in
the theory of mechanical, Sturmian, and Christoffel words: the irrational
mechanical prefix agrees with the corresponding rational prefix of slope
\(m_r/r\) or \((m_r+1)/r\). Its relation with convergents and
semiconvergents is standard; see Berstel and S\'e\'ebold
\cite{BerstelSeebold02}. The lemma is included only to keep the
extremal-count argument self-contained.
\end{remark}

Return now to \(\alpha_q=\log_2q\), and abbreviate \(m_r=m_r(q)\). Put
\[
 C_q(r)=\binom{m_r-1}{r-1},
 \qquad
 D_q(r)=\binom{m_r}{r-1},
 \qquad
 M_q(r)=\frac{q^r}{2^{m_r}}=2^{\delta_r}.
 \tag{3.6}
\]
Since \(\alpha_q\geq\log_2 3>3/2\), one has
\[
 m_r\geq r\quad(r\geq1),
 \qquad
 m_r\geq r+1\quad(r\geq2).
 \tag{3.7}
\]

\begin{theorem}[Extremal orders]
\label{thm:extremal-orders}
For every odd \(q\geq3\) and every \(r\geq1\),
\[
 \frac{C_q(r)}{r}
 \leq a_q(r)\leq
 \frac{D_q(r)}{r}.
 \tag{3.8}
\]
For \(r\geq2\), lower equality holds if and only if \(\delta_r\) is a
strict record minimum,
\[
 a_q(r)=\frac{C_q(r)}{r}
 \iff
 \delta_r<\delta_j\quad(1\leq j<r),
 \tag{3.9}
\]
and upper equality holds if and only if \(\delta_r\) is a strict record
maximum,
\[
 a_q(r)=\frac{D_q(r)}{r}
 \iff
 \delta_r>\delta_j\quad(1\leq j<r).
 \tag{3.10}
\]
Equivalently, the lower and upper equality orders are respectively the
strict record minima and maxima of \(M_q(r)\). The index \(r=1\) is the
unique common equality order.
\end{theorem}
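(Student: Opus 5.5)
By Lemma~\ref{lem:beatty-passage} we have \(a_q(r)=c_r(\alpha_q)\). So we work with the sets of block vectors \((b_1,\ldots,b_r)\), \(b_i\geq1\), that satisfy \(B_r=m_r+1\) and \(B_i\leq m_i\) for \(i<r\). Both bounds will come from Lemma~\ref{lem:rotation}, applied to two auxiliary families of compositions.

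\emph{Upper bound.} Take \(T=t=m_r+1\) and let \(\mathcal X\) be the set of all \(\binom{m_r}{r-1}=D_q(r)\) compositions of \(t\) into \(r\) positive parts. Since \(t/r>\alpha_q\), we have \(\lfloor ti/r\rfloor\geq m_i\). Moreover every admissible vector satisfies \(B_i\leq m_i<i\alpha_q<it/r\), so it is a strict rotation in the sense of Lemma~\ref{lem:rotation}. That lemma says each cyclic orbit contains at most one strict rotation, and any orbit containing one has full length \(r\). Hence the admissible vectors inject into the set of full-length orbits, which gives \(r\,a_q(r)\leq D_q(r)\).

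For the equality case, suppose first that \(\delta_r\) is a strict record maximum. Lemma~\ref{lem:record-floor} then gives \(\gcd(r,t)=1\) and \(m_i=\lfloor ti/r\rfloor\). Coprimality makes every orbit full, with exactly one strict rotation, and that rotation satisfies \(B_i<it/r\), i.e.\ \(B_i\leq\lfloor ti/r\rfloor=m_i\). So it is admissible and equality holds. Conversely, if the record condition fails, then by Lemma~\ref{lem:record-floor} either \(\gcd(r,t)>1\) or \(m_j<\lfloor tj/r\rfloor\) for some \(j<r\). In the first case, the composition with constant-pattern repeated blocks has period \(<r\), so its orbit is not full. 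In the second case we exhibit an orbit whose strict rotation is not admissible. The natural candidate is the Christoffel-type composition \(b_i=\lfloor ti/r\rfloor-\lfloor t(i-1)/r\rfloor\) with last part adjusted to hit \(t\). Its partial sums are \(\lfloor ti/r\rfloor\) and violate \(B_j\leq m_j\). Any rotation of it that is strict differs from it, so this needs care. I would instead argue by the count: when \(\gcd(r,t)=1\), exactly \(D_q(r)/r\) strict rotations exist, and the Christoffel one is among them but is not admissible, so the inequality is strict.

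\emph{Lower bound.} Take \(T=m=m_r\) and compositions \(x\) of \(m_r\) into \(r\) parts; there are \(C_q(r)\) of them. Lemma~\ref{lem:rotation} says every orbit contains a rotation with \(X_i\leq im/r\), hence \(X_i\leq\lfloor im/r\rfloor\leq m_i\). Appending \(1\) to the last part gives \(B_r=m_r+1\) with the same \(B_i\) for \(i<r\), so this is admissible. Distinct orbits give distinct admissible words, and each orbit has size at most \(r\), so the number of orbits is at least \(C_q(r)/r\) and \(a_q(r)\geq C_q(r)/r\).

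For the equality case, suppose \(\delta_r\) is a record minimum. Then \(\gcd(r,m)=1\), so there are exactly \(C_q(r)/r\) orbits, each of full length with a unique rotation satisfying \(X_i<im/r\). Conversely, every admissible word gives \(X_i=B_i\leq m_i=\lfloor im/r\rfloor<im/r\), the last inequality by coprimality, so it is the strict rotation of its orbit; this makes the map bijective. If the record condition fails, either \(\gcd(r,m)>1\), in which case some orbit is short or contains several weak rotations and the count becomes strict, or some \(m_j>\lfloor jm/r\rfloor\). In the latter case, take an admissible word with \(B_j=m_j\), built greedily by setting \(B_i=m_i\) for \(i<r\). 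Its image composition has \(X_j>jm/r\), and this should produce two admissible words in one orbit, or an admissible word outside the image. I expect this step, together with the analogous sharpness step in the upper bound, to be the main obstacle: one must exhibit explicitly an extra or missing word whenever the floor identity fails. The greedy word \(B_i=m_i\) is my first candidate in both cases.

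Finally, \(M_q(r)=2^{\delta_r}\) is increasing in \(\delta_r\), so records of \(\delta_r\) are records of \(M_q(r)\). For \(r=1\) both sides of (3.8) equal \(1\). For \(r\geq2\), the two record conditions cannot hold simultaneously, since that would need \(\delta_1<\delta_r<\delta_1\). The \(r=2\) cases need only a direct check.
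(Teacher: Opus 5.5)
\textbf{There is a genuine gap: the ``only if'' direction of (3.9) is not proved in one case.} Most of your outline follows the paper's proof:
\begin{itemize}
\item the upper bound via strict rotations of compositions of \(t\);
\item the lower bound via the weak rotation of each orbit of compositions of \(m\), with \(1\) added to the last part;
\item both ``record \(\Rightarrow\) equality'' directions via Lemma~\ref{lem:record-floor};
\item the short-orbit arguments when \(\gcd>1\);
\item the count with the rational boundary composition for strictness on the upper side.
\end{itemize}
The unproved case is \(\gcd(r,m)=1\) but \(m_j>\lfloor jm/r\rfloor\) for some \(j<r\). You name the right candidate but stop at ``this should produce two admissible words in one orbit'' and call it the main obstacle.

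\textbf{How to close it.} Let \(h\) be the composition of \(m\) with \(H_i=m_i\) for \(i<r\) and \(H_r=m\).
\begin{itemize}
\item Its parts are positive because \(\alpha_q>1\) gives \(m_i-m_{i-1}\geq1\). So \(h\) with \(1\) added to its last part is admissible.
\item The orbit of \(h\) also contains the rotation \(h'\) you selected in the lower-bound injection, with \(H'_i\leq im/r\). It likewise yields an admissible word.
\item \(h\neq h'\), because \(H_j=m_j\geq\lfloor jm/r\rfloor+1>jm/r\geq H'_j\).
\item Removing \(1\) from the last part is an injection from admissible words to compositions of \(m\). It is well defined because \(b_r\geq m_r+1-m_{r-1}\geq2\); you also use this tacitly in the record-minimum direction.
\end{itemize}
Every orbit therefore receives at least one admissible word, and this orbit receives at least two. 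Hence \(a_q(r)\geq\#\{\text{orbits}\}+1>C_q(r)/r\). This is exactly how the paper closes the lower side.

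\textbf{Smaller corrections.}
\begin{itemize}
\item The rational boundary composition with partial sums \(\lfloor ti/r\rfloor\) needs no adjustment of its last part.
\item When \(\gcd(r,t)=1\) that composition \emph{is} the strict rotation of its own orbit, since \(\lfloor ti/r\rfloor<ti/r\) for \(i<r\). That is precisely why it witnesses strictness. So your sentence that any strict rotation of it ``differs from it'' is wrong, although your subsequent count argument is right.
\item The greedy word \(B_i=m_i\) cannot be the witness on the upper side. It is admissible, hence the unique strict rotation of its orbit, and that orbit contributes normally.
\item Your proof that \(r=1\) is the only common equality order uses the ``equality \(\Rightarrow\) record'' direction of (3.9), which is the gap above. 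The paper avoids this by noting \(D_q(r)/C_q(r)=m_r/(m_r-r+1)>1\) for \(r\geq2\).
\item No separate check of \(r=2\) is needed, since Lemma~\ref{lem:record-floor} covers all \(r\geq2\).
\end{itemize}
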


\begin{proof}
Let \(m=m_r\) and \(t=m+1\).

For the lower bound, consider all positive compositions \(h\) of \(m\)
into \(r\) parts. From each cyclic orbit choose one rotation supplied by
Lemma~\ref{lem:rotation}, with partial sums
\(H_i\leq im/r<\alpha_q i\), hence \(H_i\leq m_i\). Adding \(1\) to
its last part produces an admissible composition of total \(t\). This
choice defines an injection from cyclic orbits to admissible words: if two
chosen images were equal, subtracting \(1\) from the last part would give
the same rotated composition and hence the same orbit. Since there are
\(C_q(r)\) positive compositions of \(m\) and every orbit has at most
\(r\) elements,
\[
 a_q(r)\geq C_q(r)/r.
\]

If \(d=\gcd(r,m)>1\), a positive composition of \(m/d\) into \(r/d\)
parts repeated \(d\) times gives a short orbit. Hence there are strictly
more than \(C_q(r)/r\) orbits, so the lower inequality is strict. Assume
therefore \(\gcd(r,m)=1\). Every orbit then has length \(r\) and exactly
one rotation strictly below the rational line \(im/r\). If
\[
 m_i=\left\lfloor\frac{mi}{r}\right\rfloor
 \quad(1\leq i<r),
 \tag{3.11}
\]
then (3.11) at \(i=r-1\) gives \(m_{r-1}\leq m-2\), and every
admissible composition satisfies
\[
 b_r=t-B_{r-1}\geq t-m_{r-1}\geq3.
\]
Subtracting \(1\) from its last part gives a composition \(h\) of \(m\)
with
\[
 H_i=B_i\leq m_i=
 \left\lfloor\frac{mi}{r}\right\rfloor<\frac{mi}{r}
 \qquad(1\leq i<r),
\]
where the last inequality uses \(\gcd(r,m)=1\). Thus every admissible
word maps to the unique strict rational rotation of its orbit, so there is
at most one admissible word per orbit. Together with the lower bound this
gives \(a_q(r)=C_q(r)/r\).

If (3.11) fails, the composition of \(m\) with partial sums \(H_i=m_i\)
for \(i<r\) and \(H_r=m\) has positive parts because
\(m_i-m_{i-1}\geq1\). Adding \(1\) to its last part gives one
Beatty-admissible word, while the unique strict rational rotation in the
same orbit gives another after the same addition. They are distinct
because the boundary composition lies above the rational line at a
discrepancy index. Hence one orbit contributes at least two words, whereas
every orbit contributes at least one, so the lower inequality is strict.
By Lemma~\ref{lem:record-floor}, this proves (3.9).

For the upper bound, every admissible composition \(b\) of \(t\) satisfies
\[
 B_i\leq m_i<\alpha_q i<\frac{ti}{r}
 \qquad(1\leq i<r).
\]
Thus it is a strict rationally-below rotation. By Lemma~\ref{lem:rotation},
a contributing orbit has full length and contributes at most one word.
There are \(D_q(r)=\binom{t-1}{r-1}\) positive compositions of \(t\), so
\(a_q(r)\leq D_q(r)/r\).

If \(\gcd(r,t)>1\), a short orbit exists; by the final assertion of
Lemma~\ref{lem:rotation}, no member of such an orbit can be a strict
rationally-below rotation. Hence that orbit contributes nothing and the
upper inequality is strict. If \(\gcd(r,t)=1\), all orbits have length
\(r\) and exactly one strict rationally-below rotation. Equality holds
precisely when the Beatty and rational floor boundaries agree,
\[
 m_i=\left\lfloor\frac{ti}{r}\right\rfloor
 \quad(1\leq i<r).
\]
Indeed, if a discrepancy occurs, the rational boundary composition with
partial sums \(\lfloor ti/r\rfloor\) has positive parts because \(t/r>1\).
It is the unique strict rational rotation
of its orbit but violates the Beatty condition at that index, so the upper
inequality is strict. Lemma~\ref{lem:record-floor} now gives (3.10).

Finally \(M_q(r)=2^{\delta_r}\), so its record minima and maxima are exactly
those of \(\delta_r\). For \(r\geq2\), the two equality values differ
because
\[
 \frac{D_q(r)}{C_q(r)}=
 \frac{m_r}{m_r-r+1}>1.
\]
\end{proof}

The theorem has a direct multiplicative interpretation. At a lower equality
order, \(q^r\) gives a new smallest ratio to its lower bounding power
\(2^{m_r}\). At an upper equality order,
\[
 \frac{2^{m_r+1}}{q^r}=\frac{2}{M_q(r)}
\]
gives a new smallest ratio to the next power of two.

\section{Continued fractions and semiconvergents}
\label{sec:cf}

Write
\[
 \alpha_q=[a_0;a_1,a_2,\ldots],
\]
and let \(P_k/Q_k\) be the \(k\)-th convergent, with
\[
 P_{-1}=1,\quad P_0=a_0,
 \qquad
 Q_{-1}=0,\quad Q_0=1.
\]
For an irrational \(\alpha>1\), the rational number
\(\lfloor r\alpha\rfloor/r<\alpha\) is called a \emph{strict best
lower one-sided approximation of the second kind} if
\[
 \{r\alpha\}<\{j\alpha\}
 \qquad(1\leq j<r),
\]
and \(\lceil r\alpha\rceil/r>\alpha\) is a \emph{strict best upper
one-sided approximation of the second kind} if
\[
 1-\{r\alpha\}<1-\{j\alpha\}
 \qquad(1\leq j<r).
\]
Thus the record conditions in Theorem~\ref{thm:extremal-orders} are
precisely the lower and upper one-sided best-approximation conditions of
the second kind. Earlier descriptions of best lower and upper approximates
in terms of principal and intermediate convergents are due to Kimberling
\cite{Kimberling97}. Han\v{c}l and Turek prove that the first- and
second-kind one-sided notions coincide and give the parametrization used
below \cite[Theorem~4.5]{HanclTurek}.

\begin{theorem}[Continued-fraction parametrization]
\label{thm:cf}
The lower equality orders are
\[
 \Lq_q=
 \left\{
 sQ_k+Q_{k-1}:
 k\geq1\ \text{odd},\
 0\leq s<a_{k+1}
 \right\},
 \tag{4.1}
\]
and the upper equality orders are
\[
 \Uq_q=
 \left\{
 sQ_k+Q_{k-1}:
 k\geq0\ \text{even},\
 0\leq s<a_{k+1},\
 (k,s)\neq(0,0)
 \right\}.
 \tag{4.2}
\]
Their union is exactly the set of denominators of all convergents and
semiconvergents of \(\log_2q\). The two sets meet only at \(r=1\).
\end{theorem}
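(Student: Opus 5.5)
By Theorem~\ref{thm:extremal-orders}, \(\Lq_q\) is the set of \(r\geq2\) at which \(\delta_r\) is a strict record minimum, and \(\Uq_q\) the set of \(r\geq2\) at which it is a strict record maximum. To these we adjoin \(r=1\), which is a common equality order by the same theorem and is vacuously a record of both kinds. These record conditions are exactly the strict best lower and upper one-sided approximations of the second kind defined above. The plan is therefore to reduce everything to the classical description of one-sided best approximations through the continued-fraction expansion, quoting Han\v{c}l--Turek \cite[Theorem~4.5]{HanclTurek} for the parametrization. Even so, I would give a self-contained sketch based on the standard sign pattern of \(Q\alpha-P\).

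The key facts are as follows. For \(k\geq0\), put \(\theta_k=Q_k\alpha-P_k\). Then \((-1)^k\theta_k>0\), \(|\theta_k|\) decreases, and \(|\theta_{k-1}|=a_{k+1}|\theta_k|+|\theta_{k+1}|\). The intermediate fractions are \((sP_k+P_{k-1})/(sQ_k+Q_{k-1})\) with \(0\leq s\leq a_{k+1}\), and their errors are
\[
 (sQ_k+Q_{k-1})\alpha-(sP_k+P_{k-1})=\theta_{k-1}+s\theta_k .
\]
For \(k\) odd this error is positive and decreases in \(s\) from \(\theta_{k-1}\) down to \(\theta_{k+1}\) (the value at \(s=a_{k+1}\)). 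Since it lies strictly between \(0\) and \(1\), it is exactly \(\delta_r\) for \(r=sQ_k+Q_{k-1}\). This gives a chain of lower approximations with strictly decreasing fractional parts. Symmetrically, for \(k\) even the quantity \(1-\delta_r=-(\theta_{k-1}+s\theta_k)\) decreases, giving a chain of upper approximations. Here \(k=0,s=0\) corresponds to \(Q_{-1}=0\), which is not an order and must be excluded. Note also that \(s=a_{k+1}\) reproduces \(Q_{k+1}\), which is the \(s=0\) term for index \(k+2\); this is why the ranges \(0\leq s<a_{k+1}\) avoid double counting. I would then carry out two steps.

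First, \textbf{sufficiency}: each listed \(r\) is a strict record. Second, \textbf{necessity}: every strict record is listed. For necessity, I would take \(r\) with \(Q_{k-1}+sQ_k< r< Q_{k-1}+(s+1)Q_k\) in the appropriate parity class and exhibit some \(j<r\) with \(\delta_j\leq\delta_r\) (resp.\ \(\geq\)). The standard tool is a unimodularity/lattice argument: the vectors \((Q_k,P_k)\) and \((Q_{k-1}+sQ_k,\,P_{k-1}+sP_k)\) form a basis of \(\mathbb Z^2\). Writing \((r,m_r)\) in this basis with integer coefficients, the sign constraints force both coefficients to be nonnegative, and the smaller listed denominator already achieves a better one-sided error. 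This is the main obstacle, and it is precisely the content of \cite[Theorem~4.5]{HanclTurek} (equivalently Kimberling \cite{Kimberling97}). I would cite it rather than reprove it, checking only that their normalization of ``strict'' and the indexing \(Q_{-1}=0\) match (4.1) and (4.2).

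Finally, the union of (4.1) and (4.2) runs over all \(k\geq0\) and \(0\leq s<a_{k+1}\), apart from \(Q_{-1}=0\). Its elements are therefore exactly the denominators \(sQ_k+Q_{k-1}\) of all intermediate fractions, which include the convergent denominators (the \(s=0\) terms). In other words, the union is the set of denominators of convergents and semiconvergents. For the intersection, note that \(r\) lies in both sets only if \(\delta_r\) is simultaneously a strict record minimum and a strict record maximum. For \(r\geq2\) this is impossible, since it would require \(\delta_1<\delta_r<\delta_1\). At \(r=1\) both conditions hold vacuously: \(1=Q_0\) appears in \(\Lq_q\) via \(k=1,s=0\), and \(1=Q_0+0\cdot Q_1\)-type terms (or \(k=0,s=1\) when \(a_1>1\)) appear in \(\Uq_q\). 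This matches the final assertion of Theorem~\ref{thm:extremal-orders}.
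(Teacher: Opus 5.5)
Your proposal is correct and follows essentially the paper's route. You identify the record conditions of Theorem~\ref{thm:extremal-orders} with strict one-sided best approximations of the second kind, read off (4.1)--(4.2) from Han\v{c}l--Turek's Theorem~4.5 in the indexing \(Q_{-1}=0\) with \((k,s)=(0,0)\) excluded, and obtain the intersection from the incompatibility of a strict record minimum and a strict record maximum for \(r\geq2\). One small bookkeeping slip: when \(a_1=1\), the upper-side occurrence of \(1\) is \(0\cdot Q_2+Q_1=Q_1=1\), i.e.\ \((k,s)=(2,0)\), whereas \(Q_0+0\cdot Q_1\) is the lower-side pair \((1,0)\).
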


\begin{proof}[Derivation from Han\v{c}l--Turek]
Writing their convergent and semiconvergent parametrization in our indexing
\(P_{-1}=1,P_0=a_0,Q_{-1}=0,Q_0=1\), the one-sided best approximations are
\[
 \frac{sP_k+P_{k-1}}{sQ_k+Q_{k-1}},
 \qquad 0\leq s<a_{k+1},
\]
with odd \(k\) on the lower side and even \(k\) on the upper side. The
pair \((k,s)=(0,0)\) is excluded because its denominator is
\(Q_{-1}=0\). Taking denominators gives (4.1) and (4.2). The union statement
is the same theorem. Theorem~\ref{thm:extremal-orders} shows that a positive
denominator can belong to both equality sets only for \(r=1\). The lower
set always contains \(1\) through \((k,s)=(1,0)\). On the upper side, if
\(a_1\geq2\), the index \(1\) comes from \((k,s)=(0,1)\); if
\(a_1=1\), then \(Q_1=Q_0=1\) and it comes from \((k,s)=(2,0)\).
Thus \(1\) occurs once on each side.
\end{proof}

In particular, the principal convergents already show that both equality
sets are infinite: with the above indexing, the even convergents
\(P_{2n}/Q_{2n}\) lie below \(\alpha_q\) and give lower record
denominators, whereas the odd convergents \(P_{2n+1}/Q_{2n+1}\) lie
above \(\alpha_q\) and give upper record denominators.

For \(q=3\),
\[
 \log_2 3=[1;1,1,2,2,3,1,5,2,23,\ldots].
\]
The two equality sets begin
\[
 \Lq_3:1,2,7,12,53,359,665,\ldots,
 \qquad
 \Uq_3:1,3,5,17,29,41,94,147,200,253,306,971,\ldots.
\]
The union \(\Lq_3\cup\Uq_3\) is exactly the set of terms of OEIS
\OEIS{A206788}, the denominators of semiconvergents to \(\log_2 3\)
\cite{OEIS206788}. The OEIS entry itself lists the denominator \(1\)
twice, once for each side; as an equality-order set it occurs only once.

For comparison, Table~\ref{tab:records} lists the equality orders up to
\(20\) for the three OEIS cases.

\begin{table}[ht]
\centering
\small
\begin{tabular}{c p{5.3cm} p{5.3cm}}
\toprule
\(q\) & lower equality orders \(r\leq20\) & upper equality orders \(r\leq20\)\\
\midrule
3 & \(1,2,7,12\) & \(1,3,5,17\)\\
5 & \(1,4,7,10,13,16,19\) & \(1,2,3\)\\
7 & \(1,2,3,4,5\) & \(1,6,11,16\)\\
\bottomrule
\end{tabular}
\caption{Initial lower and upper equality orders.}
\label{tab:records}
\end{table}

For \(q=3\), several auxiliary sequences are also catalogued:
\(m_r(3)\) is \OEIS{A056576}, the word length \(m_r(3)+1\) is
\OEIS{A020914}, the continued fraction of \(\log_2 3\) is
\OEIS{A028507}, and the principal-convergent denominators are
\OEIS{A005664}.

The full merged denominator sequences for \(q=5\) and \(q=7\) are
recorded in Section~\ref{sec:conclusion}.

\section{Rational Catalan values}
\label{sec:catalan}

For coprime positive integers \(a,b\), write
\[
 \Cat(a,b)=\frac1{a+b}\binom{a+b}{a}.
 \tag{5.1}
\]
This is the rational Catalan number counting lattice paths from
\((0,0)\) to \((a,b)\) that stay weakly below the diagonal; see Bizley
\cite{Bizley54} and Armstrong, Rhoades, and Williams \cite{ArmstrongRW13}.
The OEIS entry for \OEIS{A100982} already records lattice-walk material,
including a link by van Tol and cross-references to \OEIS{A060941} and
\OEIS{A293946} \cite{OEIS100982}. The point here is different: the
extremal-order theorem identifies exactly when the irrational Beatty
boundary collapses to a coprime rational boundary, and does so uniformly
for the \(q\)-family.

\begin{theorem}[Dyck paths at extremal orders]
\label{thm:dyck}
Let \(r\geq2\). By (3.7), all path heights below are positive, so the
rectangles are nondegenerate. If \(r\in\Lq_q\), then
\[
 a_q(r)=\Cat(r,m_r-r)
 =\frac1{m_r}\binom{m_r}{r}.
 \tag{5.2}
\]
More precisely, \(\Aq_q(r)\) is in bijection with the rational Dyck paths
from \((0,0)\) to \((r,m_r-r)\).

If \(r\in\Uq_q\), then
\[
 a_q(r)=\Cat(r,m_r+1-r)
 =\frac1{m_r+1}\binom{m_r+1}{r}.
 \tag{5.3}
\]
In this case \(\Aq_q(r)\) is in bijection with the rational Dyck paths
from \((0,0)\) to \((r,m_r+1-r)\).
\end{theorem}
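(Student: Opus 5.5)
The plan is to combine Theorem~\ref{thm:extremal-orders} with Lemma~\ref{lem:record-floor}: at an extremal order the irrational Beatty boundary coincides with a coprime rational floor boundary, and the admissible compositions then become exactly the lattice paths under a rational diagonal. First the numerical identity. For \(r\in\Lq_q\), Theorems~\ref{thm:cf} and~\ref{thm:extremal-orders} give
\[
 a_q(r)=\frac1r\binom{m-1}{r-1}\qquad (m=m_r),
\]
and the elementary identity \(\frac1r\binom{m-1}{r-1}=\frac1m\binom mr=\Cat(r,m-r)\) yields (5.2). Likewise, for \(r\in\Uq_q\) with \(t=m_r+1\), one has \(\frac1r\binom{t-1}{r-1}=\frac1t\binom tr=\Cat(r,t-r)\). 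Lemma~\ref{lem:record-floor} supplies \(\gcd(r,m)=1\), respectively \(\gcd(r,t)=1\), so \(\gcd(r,m-r)=1\) and \(\gcd(r,t-r)=1\), and \(\Cat\) is applied to coprime arguments. By (3.7), \(m-r\geq1\), so the rectangles are nondegenerate.

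For the explicit bijection in the lower case, I take an admissible word with block sums \(B_i\) from Lemma~\ref{lem:beatty-passage}. By Lemma~\ref{lem:record-floor} the conditions are \(B_i\leq\lfloor mi/r\rfloor\) for \(i<r\) and \(B_r=m+1\). I subtract \(1\) from the last block, obtaining a composition \(h\) of \(m\) with \(H_i<mi/r\) for \(i<r\), the inequality being strict by coprimality. I read \(h\) as a word in \(q/2\) (east step) and \(1/2\) (north step) after deleting one \(1/2\) from each block: block \(i\) contributes one east step followed by \(b_i-1\) north steps. The result is a path from \((0,0)\) to \((r,m-r)\), and the condition \(H_i<mi/r\) becomes
\[
 H_i-i<\frac{i(m-r)}{r},
\]
that is, the height after each completed block lies strictly below the line of slope \((m-r)/r\). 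Since the path reaches its highest point in each column at the block end, this says the path stays below the diagonal at all integer abscissae \(i<r\). By coprimality, strict and weak versions are equivalent away from the endpoints, and this is exactly the rational Dyck condition in the convention of (5.1), possibly after reflecting or reversing the path. The inverse map restores the deleted \(1/2\)'s and the extra final \(1/2\), and the injectivity argument in the proof of Theorem~\ref{thm:extremal-orders} ensures the correspondence is onto.

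The upper case is the same construction without subtracting \(1\). By (3.5) the conditions are \(B_i\leq\lfloor ti/r\rfloor<ti/r\) with \(B_r=t\), and the path runs to \((r,t-r)\).

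The main obstacle is bookkeeping of conventions, not any new idea. I have to decide whether the path is weakly below or above the diagonal, and whether the unit removed in the lower case is correctly accounted for, since the last block has \(b_r\geq3\) and is otherwise unconstrained. I will therefore verify the bijection directly on small cases (for example \(q=3\), \(r=2\), where \(m=3\) and \(\Cat(2,1)=1\)) before writing the general argument.
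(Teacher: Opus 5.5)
Your proposal is correct, and your bijection is the paper's. Reading $q/2\mapsto E$, $1/2\mapsto N$ and dropping the final letter $1/2$ (nothing is dropped in the upper case) is exactly the map $(h_1,\ldots,h_r)\mapsto EN^{h_1-1}\cdots EN^{h_r-1}$ used in the proof. Since (3.3), resp.\ (3.5), makes the admissibility conditions $B_i\le m_i$ literally equal to the rational floor conditions, the map is a bijection by inspection.

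The real difference is where the number comes from. The paper gets (5.2) and (5.3) by counting the Dyck paths with Bizley's coprime formula. You instead read the value off the equality cases of Theorem~\ref{thm:extremal-orders}, via $\frac1r\binom{m-1}{r-1}=\frac1m\binom mr$ and $\frac1r\binom{t-1}{r-1}=\frac1t\binom tr$ (with $m=m_r$, $t=m+1$). Your route needs no external enumeration. Combined with the bijection, it even reproves the rational Catalan count for these particular rectangles; this is not surprising, since Theorem~\ref{thm:extremal-orders} rests on a cycle-lemma argument of the same kind as Bizley's. The paper's route instead makes the lattice-path picture itself the source of the value.

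Some bookkeeping fixes:
\begin{itemize}
\item The phrase ``deleting one $1/2$ from each block'' together with ``$b_i-1$ north steps'' is off by one in the last block. That block must contribute $b_r-2=h_r-1$ north steps for the path to end at $(r,m_r-r)$.
\item No reflection or reversal is needed: the path lies weakly below the diagonal in precisely the convention of (5.1).
\item Surjectivity needs no appeal to the injectivity argument of Theorem~\ref{thm:extremal-orders}. The preimage of any such path satisfies $B_i\le\lfloor mi/r\rfloor=m_i$ and $B_r=m+1$, so it is admissible by Lemma~\ref{lem:beatty-passage}.
\end{itemize}
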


\begin{proof}
Suppose first that \(r\in\Lq_q\). By Lemma~\ref{lem:record-floor},
\[
 m_i=\left\lfloor\frac{m_ri}{r}\right\rfloor
 \quad(1\leq i<r),
 \qquad
 \gcd(r,m_r)=1.
 \tag{5.4}
\]
Let \((b_1,\ldots,b_r)\) be the composition associated with an admissible
word. Since \(\alpha_q>1\), one has \(m_r-m_{r-1}\geq1\), and hence
\[
 b_r=m_r+1-B_{r-1}
 \geq m_r+1-m_{r-1}\geq2.
\]
Subtracting \(1\) from the last part therefore gives, reversibly, a positive
composition \(h\) of \(m_r\). This subtraction is needed only on the lower
side because the admissible word has total \(m_r+1\), whereas the lower
rational boundary in (5.4) has total \(m_r\).

Set \(y_i=h_i-1\) and \(s=m_r-r\). Then \(y_i\geq0\),
\(\sum_i y_i=s\), and
\[
 Y_i:=y_1+\cdots+y_i
 \leq
 \left\lfloor\frac{m_ri}{r}\right\rfloor-i
 =\left\lfloor\frac{si}{r}\right\rfloor.
\]
Thus
\[
 (y_1,\ldots,y_r)
 \longmapsto
 E N^{y_1}E N^{y_2}\cdots E N^{y_r}
\]
is a bijection onto the paths from \((0,0)\) to \((r,s)\) lying weakly
below the diagonal. Since \(s=m_r-r\),
\[
 \gcd(r,s)=\gcd(r,m_r)=1,
\]
and their number is (5.2).

If \(r\in\Uq_q\), put \(t=m_r+1\). Lemma~\ref{lem:record-floor} gives
\[
 m_i=\left\lfloor\frac{ti}{r}\right\rfloor
 \quad(1\leq i<r),
 \qquad
 \gcd(r,t)=1.
 \tag{5.5}
\]
Now no subtraction is needed because the admissible composition already
has total \(t\). Set \(y_i=b_i-1\) and \(s=t-r\). Then
\[
 Y_i=B_i-i
 \leq
 \left\lfloor\frac{ti}{r}\right\rfloor-i
 =\left\lfloor\frac{si}{r}\right\rfloor,
\]
and the same path construction gives (5.3), because
\[
 \gcd(r,s)=\gcd(r,t-r)=\gcd(r,t)=1.
\]
\end{proof}

The theorem explains several small values without recursion. For
\(\OEIS{A100982}\),
\[
 A100982(7)=\Cat(7,4)=30,\qquad
 A100982(12)=\Cat(12,7)=2652,
\]
and the upper extremal order \(17\) gives
\(A100982(17)=\Cat(17,10)=312455\).

For \(q=5\), the first three nontrivial values are ordinary Catalan numbers:
\[
 A174795(2)=2,\qquad A174795(3)=5,\qquad A174795(4)=14,
\]
because in these cases Theorem~\ref{thm:dyck} gives \(\Cat(r,r+1)\).
This coincidence breaks at \(r=5\): \(A174795(5)=56\), whereas
\(\OEIS{A000108}(5)=42\).

For \(q=7\),
\[
 A174796(1),\ldots,A174796(6)=1,2,7,30,143,728,
\]
which is exactly \(\OEIS{A006013}(0),\ldots,\OEIS{A006013}(5)\). For \(2\leq r\leq5\)
one has \(m_r=3r-1\) and a lower equality order, while at \(r=6\) one has
\(m_6+1=17=3\cdot6-1\) and an upper equality order. Hence all five nontrivial
terms equal
\[
 \Cat(r,2r-1)
 =\frac1r\binom{3r-2}{r-1}
 =\OEIS{A006013}(r-1).
\]
The coincidence breaks at \(r=7\), where \(A174796(7)=3148\) but
\(\OEIS{A006013}(6)=3876\).

\section{Comparison across \texorpdfstring{\(q\)}{q}}
\label{sec:comparison}

The Beatty representation also gives a sharp monotonicity statement.

\begin{proposition}[Monotonicity and rigidity]
\label{prop:monotonicity}
Let \(1<\alpha<\beta\) and \(r\geq2\).  Then
\[
 c_r(\alpha)\leq c_r(\beta).
 \tag{6.1}
\]
Moreover,
\[
 c_r(\alpha)=c_r(\beta)
 \iff
 \lfloor i\alpha\rfloor=\lfloor i\beta\rfloor
 \quad(1\leq i<r).
 \tag{6.2}
\]
Hence, for fixed \(r\), the function
\(\alpha\mapsto c_r(\alpha)\) is a nondecreasing step function whose
constant cells are exactly the cells of the finite Beatty prefix
\[
 \bigl(\lfloor\alpha\rfloor,\ldots,
       \lfloor(r-1)\alpha\rfloor\bigr).
\]
\end{proposition}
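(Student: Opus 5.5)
Write \(a_i=\lfloor i\alpha\rfloor\) and \(b_i=\lfloor i\beta\rfloor\) for \(1\leq i<r\). Since \(\alpha<\beta\), we have \(a_i\leq b_i\) for every \(i\). By (2.1), \(c_r(\alpha)\) counts the strictly increasing sequences \(1\leq s_1<\cdots<s_{r-1}\) that satisfy \(s_i\leq a_i\), and \(c_r(\beta)\) counts those with \(s_i\leq b_i\). The first family is therefore a subfamily of the second, and (6.1) follows at once from this inclusion. It also gives the backward implication of (6.2): if \(a_i=b_i\) for all \(i<r\), the two defining conditions coincide and so do the counts. Nothing beyond the definition is needed here, and (6.2) is the only real content.

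For the forward implication of (6.2) I argue by contraposition. Suppose \(a_i<b_i\) for some \(i<r\). I will exhibit a sequence counted by \(c_r(\beta)\) but not by \(c_r(\alpha)\); with the inclusion this gives strict inequality. Take the greedy boundary sequence \(s_j=b_j\) for \(1\leq j<r\). It is admissible for \(\beta\) for three reasons:
\begin{itemize}
\item[(i)] \(b_1\geq 1\) because \(\beta>1\);
\item[(ii)] \(b_{j+1}-b_j\geq 1\), since \(\lfloor x+\beta\rfloor\geq\lfloor x\rfloor+1\) when \(\beta>1\), so the sequence is strictly increasing;
\item[(iii)] \(s_j\leq b_j\) holds trivially.
\end{itemize}
At the discrepancy index, \(s_i=b_i>a_i\), so this sequence is not admissible for \(\alpha\). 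This is the same boundary-composition trick used in the proof of Theorem~\ref{thm:extremal-orders}.

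The only point that needs care is positivity and strict increase of the boundary sequence, and that is exactly where the hypothesis \(\alpha,\beta>1\) enters. I expect no real obstacle beyond this point.

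For the step-function statement, the map \(\alpha\mapsto(\lfloor\alpha\rfloor,\ldots,\lfloor(r-1)\alpha\rfloor)\) is coordinatewise nondecreasing. It is constant on the half-open intervals between consecutive points of the finite set \(\{p/i: 1\leq i<r,\ p\in\mathbb Z\}\), and these intervals are the cells of the prefix. Inside one cell all floors agree, so \(c_r\) is constant there by (6.2). Now take two distinct cells with \(\alpha<\beta\). The prefixes differ, and coordinatewise monotonicity forces \(a_i<b_i\) at some \(i\). By (6.2), \(c_r(\alpha)<c_r(\beta)\). Hence the constant cells of \(c_r\) are exactly the cells of the prefix, and by (6.1) the function is nondecreasing.
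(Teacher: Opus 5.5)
Your proof is correct and has the same structure as the paper's: the inclusion of the two families gives (6.1) and the easy half of (6.2), and strictness comes from one explicit sequence that is counted for \(\beta\) but not for \(\alpha\). The only difference is the witness. You take the \(\beta\)-boundary sequence \(s_i=\lfloor i\beta\rfloor\), while the paper takes the minimal sequence \(1,\ldots,j-1,\lfloor j\alpha\rfloor+1,\lfloor j\alpha\rfloor+2,\ldots\), so your check is a little shorter and uses only \(\beta>1\). One small slip: the breakpoint set \(\{p/i\}\) is infinite, not finite, but it is locally finite, and that is all your cell argument needs.
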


\begin{proof}
Write
\[
 M_i=\lfloor i\alpha\rfloor,\qquad
 N_i=\lfloor i\beta\rfloor.
\]
By definition (2.1), \(c_r(\alpha)\) counts
\[
 1\leq s_1<\cdots<s_{r-1},\qquad s_i\leq M_i,
\]
and the same description with \(N_i\) counts \(c_r(\beta)\).
Since \(M_i\leq N_i\), (6.1) follows.

If all \(M_i=N_i\), equality is immediate.  Conversely, assume
\(M_j<N_j\) for some \(j<r\).  Since \(\alpha>1\), \(M_j\geq j\).
Set
\[
 s_i=i\quad(i<j),\qquad
 s_j=M_j+1,
\]
and
\[
 s_i=M_j+1+i-j\quad(j<i<r).
\]
This sequence is strictly increasing and violates the \(\alpha\)-bound at
\(j\).  On the other hand \(s_j\leq N_j\), and for \(i>j\),
\[
 N_i-N_j\geq i-j
\]
because \(\beta>1\).  Hence \(s_i\leq N_i\), so the sequence is counted
for \(\beta\) but not for \(\alpha\).  Thus the inequality is strict.
\end{proof}

\begin{corollary}
\label{cor:three-sequences}
For every \(r\geq2\),
\[
 A100982(r)<A174795(r).
 \tag{6.3}
\]
Furthermore,
\[
 A174795(2)=A174796(2)=2,
\]
and
\[
 A174795(r)<A174796(r)
 \qquad(r\geq3).
 \tag{6.4}
\]
\end{corollary}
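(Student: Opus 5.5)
The plan is to derive everything from Proposition~\ref{prop:monotonicity} together with the identification \(a_q(r)=c_r(\alpha_q)\) of Lemma~\ref{lem:beatty-passage}. Since \(\alpha_3=\log_2 3<\alpha_5=\log_2 5<\alpha_7=\log_2 7\), (6.1) immediately gives the weak inequalities \(a_3(r)\le a_5(r)\le a_7(r)\) for all \(r\ge2\). By the rigidity criterion (6.2), each strict inequality then comes down to exhibiting one index \(1\le i<r\) at which the Beatty prefixes differ.

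For (6.3), take \(i=1\): \(\lfloor\log_2 3\rfloor=1\) while \(\lfloor\log_2 5\rfloor=2\). This index is available for every \(r\ge2\), so (6.2) yields \(A100982(r)<A174795(r)\) for all \(r\ge2\).

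For the comparison of \(q=5\) and \(q=7\), the first floors agree: \(\lfloor\log_2 5\rfloor=\lfloor\log_2 7\rfloor=2\). Hence for \(r=2\) the prefix consists of the single index \(i=1\), and (6.2) gives equality. The common value is \(c_2=\#\{1\le s_1\le 2\}=2\), which agrees with the OEIS data and with the Catalan values \(\Cat(2,3)=2\) from Theorem~\ref{thm:dyck}. For \(r\ge3\) we use the index \(i=2\), since \(\lfloor 2\log_2 5\rfloor=\lfloor\log_2 25\rfloor=4\) while \(\lfloor 2\log_2 7\rfloor=\lfloor\log_2 49\rfloor=5\). The index \(2<r\) is admissible precisely when \(r\ge3\), so (6.2) gives (6.4).

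The only real care needed is the integer-part check through powers of two: \(16<25<32\) and \(32<49<64\) for \(i=2\), and \(2<3<4<5<7<8\) for \(i=1\). One must also observe that the strictness statement in Proposition~\ref{prop:monotonicity} requires a discrepancy index strictly below \(r\). This is exactly why \(r=2\) is an equality case for the pair \((5,7)\), and why it is not one for \((3,5)\).
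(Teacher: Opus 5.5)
Your proposal is correct and follows the same route as the paper: weak monotonicity from Proposition~\ref{prop:monotonicity}, then strictness via the rigidity criterion (6.2), using the discrepancy index \(i=1\) for \(q=3,5\) and \(i=2\) for \(q=5,7\). You also make explicit two points the paper leaves implicit, namely that the common value at \(r=2\) is \(c_2=\lfloor\alpha\rfloor=2\), and that the discrepancy index must be strictly less than \(r\), which is why \(r=2\) is an equality case for \(q=5,7\).
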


\begin{proof}
We have
\[
 \log_2 3<\log_2 5<\log_2 7.
\]
For the first pair the Beatty prefixes already differ at \(i=1\), since
\[
 \lfloor\log_2 3\rfloor=1,\qquad
 \lfloor\log_2 5\rfloor=2.
\]
For the second pair the \(i=1\) values are both \(2\), but
\[
 \lfloor2\log_2 5\rfloor=4,\qquad
 \lfloor2\log_2 7\rfloor=5.
\]
Now apply Proposition~\ref{prop:monotonicity}.
\end{proof}

\section{Growth and exact normalized oscillation}
\label{sec:growth}

Put
\[
 B(\alpha)=
 \frac{\alpha^\alpha}{(\alpha-1)^{\alpha-1}},
 \qquad
 B_q=B(\alpha_q).
 \tag{7.1}
\]

\begin{corollary}[Growth constants]
\label{cor:growth}
For every odd \(q\geq3\),
\[
 \lim_{r\to\infty}a_q(r)^{1/r}=B_q.
 \tag{7.2}
\]
The function \(q\mapsto B_q\) is strictly increasing. In particular,
\[
 \lim_{r\to\infty}
 \left(\frac{A174795(r)}{A100982(r)}\right)^{1/r}
 =\frac{B_5}{B_3}
 \approx1.7218785536,
 \tag{7.3}
\]
and
\[
 \lim_{r\to\infty}
 \left(\frac{A174796(r)}{A174795(r)}\right)^{1/r}
 =\frac{B_7}{B_5}
 \approx1.2726660717.
 \tag{7.4}
\]
\end{corollary}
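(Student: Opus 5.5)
The plan is to squeeze \(a_q(r)\) between the two cycle bounds (3.8) of Theorem~\ref{thm:extremal-orders} and read off the exponential growth of both binomial coefficients. Put \(m=m_r=\lfloor r\alpha_q\rfloor\), so \(m=r\alpha_q+O(1)\). Then
\[
 \frac1r\binom{m-1}{r-1}\leq a_q(r)\leq\frac1r\binom{m}{r-1}.
\]
Since \(\binom{m}{r-1}/\binom{m-1}{r-1}=m/(m-r+1)\) is bounded (it tends to \(\alpha_q/(\alpha_q-1)\)), and the factor \(1/r\) is subexponential, it suffices to prove
\[
 \lim_{r\to\infty}\binom{m_r}{r}^{1/r}=B(\alpha_q).
\]
Changing \(r-1\) to \(r\) costs only the polynomial factor \((m-r+1)/r\), so this limit controls both bounds.

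For the binomial limit I will use Stirling's formula, or more elementarily the entropy estimate
\[
 \frac{1}{n+1}\,2^{nH(p)}\leq\binom{n}{k}\leq 2^{nH(p)},\qquad p=k/n.
\]
Take \(n=m_r\) and \(k=r\). Then \(k/n\to1/\alpha_q\) and \(n/r\to\alpha_q\), so
\[
 \frac1r\log\binom{m_r}{r}\to \alpha_q\,H_e(1/\alpha_q)=\alpha_q\log\alpha_q-(\alpha_q-1)\log(\alpha_q-1),
\]
with \(H_e\) the natural-log entropy. This is \(\log B(\alpha_q)\), giving (7.2). Continuity of \(H_e\) absorbs the \(O(1)\) error in \(m_r\).

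For strict monotonicity I will differentiate \(\log B(\alpha)=\alpha\log\alpha-(\alpha-1)\log(\alpha-1)\). This gives
\[
 \frac{d}{d\alpha}\log B(\alpha)=\log\frac{\alpha}{\alpha-1}>0\qquad(\alpha>1).
\]
Combined with the strict increase of \(q\mapsto\log_2q\), this shows \(q\mapsto B_q\) is strictly increasing. The ratio limits (7.3) and (7.4) follow by dividing the \(r\)-th roots, since both limits exist and are positive. The numerical constants are direct evaluations, which I would only spot-check.

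The one point needing care is uniformity of the error terms: \(m_r/r\) converges but \(m_r\) is not exactly \(r\alpha_q\). The entropy bounds handle this cleanly, because they are exact inequalities and the only limit taken is \(H_e(r/m_r)\to H_e(1/\alpha_q)\). I therefore expect no real obstacle. The main step is simply invoking (3.8), so that the irregular, oscillating behaviour of \(a_q(r)\) never has to be examined directly.
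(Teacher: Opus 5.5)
Your proposal is correct and takes essentially the same route as the paper. Both squeeze \(a_q(r)\) between the cycle bounds (3.8) and show that both binomial bounds have \(r\)-th-root limit \(B_q\); the paper does this with Stirling, while you use the equivalent entropy estimate after the harmless reduction to \(\binom{m_r}{r}\). Both then conclude from \(\frac{d}{d\alpha}\log B(\alpha)=\log\frac{\alpha}{\alpha-1}>0\) and by taking quotients of the root limits.
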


\begin{proof}
By Theorem~\ref{thm:extremal-orders}, \(a_q(r)\) is squeezed between
\(C_q(r)/r\) and \(D_q(r)/r\). Since
\[
 m_r=\alpha_q r+O(1),
 \qquad
 m_r-r=(\alpha_q-1)r+O(1)\longrightarrow\infty,
\]
Stirling's formula gives
\[
 \frac1r\log\binom{m_r-1}{r-1}
 =
 \alpha_q\log\alpha_q
 -(\alpha_q-1)\log(\alpha_q-1)+o(1),
\]
and the same limit with \(m_r\) in place of \(m_r-1\).  Hence both
bounds have \(r\)-th-root limit
\[
 \frac{\alpha_q^{\alpha_q}}
 {(\alpha_q-1)^{\alpha_q-1}}=B_q.
\]
Moreover,
\[
 \frac{d}{d\alpha}\log B(\alpha)
 =\log\frac{\alpha}{\alpha-1}>0,
\]
so \(q\mapsto B_q\) is strictly increasing. Taking quotients of the root
limits gives (7.3) and (7.4).
\end{proof}

Define
\[
 R_q(r)=
 \frac{r\,a_q(r)}
 {\binom{m_r(q)-1}{r-1}}.
 \tag{7.5}
\]

\begin{corollary}[Exact normalized oscillation]
\label{cor:oscillation}
For every odd \(q\geq3\),
\[
 \liminf_{r\to\infty}R_q(r)=1,
 \qquad
 \limsup_{r\to\infty}R_q(r)
 =\frac{\alpha_q}{\alpha_q-1}.
 \tag{7.6}
\]
The value \(R_q(r)=1\) occurs exactly at \(r\in\Lq_q\). At
\(r\in\Uq_q\),
\[
 R_q(r)=\frac{m_r(q)}{m_r(q)-r+1},
 \tag{7.7}
\]
and these values tend to the limsup. In fact
\[
 R_q(r)<\frac{\alpha_q}{\alpha_q-1}
 \qquad(r\geq1).
 \tag{7.8}
\]
\end{corollary}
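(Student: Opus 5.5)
The plan is to read everything off Theorem~\ref{thm:extremal-orders} and Theorem~\ref{thm:cf}. First, (3.8) gives
\[
1\leq R_q(r)\leq \frac{D_q(r)}{C_q(r)}=\frac{m_r}{m_r-r+1}
\]
for every \(r\). By (3.9), the value \(R_q(r)=1\) occurs exactly at the lower equality orders \(r\geq2\); the case \(r=1\) is trivial, since \(R_q(1)=1\) and \(1\in\Lq_q\). So the set where \(R_q(r)=1\) is precisely \(\Lq_q\). By Theorem~\ref{thm:cf} this set is infinite, because it contains the principal-convergent denominators \(Q_{2n}\). Hence \(\liminf R_q(r)\leq1\), and combined with the lower bound \(R_q\geq1\) this gives \(\liminf R_q(r)=1\).

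For the upper side, (3.10) gives (7.7) directly at every \(r\in\Uq_q\), \(r\geq2\); at \(r=1\) the formula also holds, since \(m_1/(m_1)=1=R_q(1)\). The set \(\Uq_q\) is infinite, again by Theorem~\ref{thm:cf}. Along it, \(m_r=\alpha_q r-\delta_r\) with \(\delta_r\in(0,1)\), so
\[
\frac{m_r}{m_r-r+1}=\frac{\alpha_q-\delta_r/r}{\alpha_q-1+(1-\delta_r)/r}\longrightarrow\frac{\alpha_q}{\alpha_q-1}.
\]
It follows that \(\limsup R_q(r)\geq\alpha_q/(\alpha_q-1)\). The matching bound \(\limsup\leq\alpha_q/(\alpha_q-1)\) will follow from (7.8), so the only genuine content left is the strict inequality (7.8).

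To prove (7.8), I bound \(R_q(r)\leq m_r/(m_r-r+1)\) and show that this quantity is strictly less than \(\alpha_q/(\alpha_q-1)\). Cross-multiplying, which is legitimate since all terms are positive because \(m_r\geq r\) by (3.7), the claim becomes
\[
m_r(\alpha_q-1)<\alpha_q(m_r-r+1),
\]
that is, \(m_r<\alpha_q(r-1)+\alpha_q=\alpha_q r\). This holds because \(m_r=\lfloor r\alpha_q\rfloor<r\alpha_q\) by irrationality. So (7.8) holds for every \(r\geq1\), and the limsup equals \(\alpha_q/(\alpha_q-1)\) exactly.

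I do not expect a real obstacle here. The one point needing care is that the limsup requires an infinite subsequence on which the upper bound is attained, and that is exactly what the infiniteness of \(\Uq_q\) provides.
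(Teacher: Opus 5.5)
Your overall route is the paper's: squeeze \(R_q(r)\) between \(1\) and \(m_r/(m_r-r+1)\) using (3.8), get infinitely many orders in \(\Lq_q\) and \(\Uq_q\) from the principal convergents, and use \(m_r/r\to\alpha_q\). The one computation that proves (7.8), however, is wrong. Expanding \(m_r(\alpha_q-1)<\alpha_q(m_r-r+1)\) gives \(-m_r<-\alpha_q r+\alpha_q\), that is,
\[
 m_r>\alpha_q(r-1)\quad\Longleftrightarrow\quad \{r\alpha_q\}<\alpha_q ,
\]
and not \(m_r<\alpha_q r\). The inequality you verified gives no upper bound. For \(r\geq2\), the map \(x\mapsto x/(x-r+1)=1+(r-1)/(x-r+1)\) is decreasing on \(x>r-1\), and \(\alpha_q/(\alpha_q-1)\) is its value at \(x=\alpha_q(r-1)\). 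So \(m_r<\alpha_q r\) only yields the lower bound \(m_r/(m_r-r+1)>\alpha_q r/(\alpha_q r-r+1)\). Because you derive \(\limsup R_q(r)\leq\alpha_q/(\alpha_q-1)\) from (7.8), that half of (7.6) is also unsupported as written.

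The repair is one line, and it is the paper's argument: \(m_r=\lfloor r\alpha_q\rfloor>r\alpha_q-1>r\alpha_q-\alpha_q\) because \(\alpha_q>1\), i.e.\ \(\{r\alpha_q\}<1<\alpha_q\). The operative facts are \(\{r\alpha_q\}<1\) and \(\alpha_q>1\), not irrationality. You could also get the limsup upper bound without (7.8), as the paper does: \(m_r/(m_r-r+1)\to\alpha_q/(\alpha_q-1)\) along all \(r\), not just along \(\Uq_q\). With this correction the rest of your argument is fine, including the checks at \(r=1\) and the identification of \(\{r:R_q(r)=1\}\) with \(\Lq_q\).
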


\begin{proof}
Theorem~\ref{thm:extremal-orders} gives
\[
 1\leq R_q(r)\leq\frac{m_r}{m_r-r+1}.
\]
As noted after Theorem~\ref{thm:cf}, the alternating principal
convergents already supply infinitely many lower and upper equality
orders. Along \(\Lq_q\) the left endpoint is attained, and along
\(\Uq_q\) the right endpoint is attained. Since \(m_r/r\to\alpha_q\),
the latter values tend to \(\alpha_q/(\alpha_q-1)\), proving (7.6).
Finally
\[
 \frac{m_r}{m_r-r+1}<\frac{\alpha_q}{\alpha_q-1}
\]
is equivalent to \(\{r\alpha_q\}<\alpha_q\), which is automatic because
\(0<\{r\alpha_q\}<1<\alpha_q\).
\end{proof}

For reference, Table~\ref{tab:threeq} gives the basic constants to ten
decimal places.

\begin{table}[ht]
\centering
\small
\renewcommand{\arraystretch}{1.15}
\begin{tabular}{c c c c}
\toprule
\(q\) & \(\alpha_q=\log_2q\) & \(B_q\) &
\(\alpha_q/(\alpha_q-1)\)\\
\midrule
3 & 1.5849625007 & 2.8395137305 & 2.7095112914\\
5 & 2.3219280949 & 4.8892977952 & 1.7564707974\\
7 & 2.8073549221 & 6.2224434185 & 1.5532947557\\
\bottomrule
\end{tabular}
\caption{Exponential growth constants and normalized upper envelopes.}
\label{tab:threeq}
\end{table}

\section{OEIS consequences and concluding remarks}
\label{sec:conclusion}

For \OEIS{A100982}, Theorem~\ref{thm:cf} identifies the equality-order
set with the set of terms of \OEIS{A206788}; that OEIS entry records the
initial denominator \(1\) twice, once for each side. For \(q=5\), the
side-labelled merged denominator
sequence begins
\[
 1,1,2,3,4,7,10,13,16,19,22,25,28,31,59,87,146,205,351,497,643,\ldots,
\]
and for \(q=7\) it begins
\[
 1,1,2,3,4,5,6,11,16,21,26,31,57,83,109,135,244,353,462,571,\ldots.
\]
In these side-labelled lists the repeated initial \(1\) records the two
sides; as an equality-order set it occurs only once. These two sequences
are natural companions to
\OEIS{A174795} and \OEIS{A174796}.

The resulting picture is compact. Extremal admissible-word counts are
simultaneously cycle-bound equalities, one-sided best approximations,
semiconvergent denominators, and record approaches of \(q^r\) to powers of
\(2\). At those extremal orders the words reduce canonically to rational
Dyck paths and the counts to rational Catalan numbers. Proposition~\ref{prop:monotonicity}
then supplies a complementary rigidity principle across the parameter
\(q\). Together these statements give a common arithmetic and combinatorial
framework for \OEIS{A100982}, \OEIS{A174795}, and \OEIS{A174796} without
requiring the broader marked-rotation machinery of \cite{WinBeatty}.

\end{document}